  \providecommand\BibTeX{{%
    \normalfont B\kern-0.5em{\scshape i\kern-0.25em b}\kern-0.8em\TeX}}}
\newtheorem{Problem}{Problem}
\newtheorem{Theorem}{Theorem}
\newtheorem{Proposition}{Proposition}
\newtheorem{Lemma}{Lemma}
\newtheorem{Corollary}{Corollary}
\newtheorem{Definition}{Definition}
\newtheorem{Example}{Example}
\newcommand{\uproman}[1]{\uppercase\expandafter{\romannumeral#1}}
\pgfplotsset{compat=1.13}
\def\eatspace#1{#1}
\def\step#1#2{\par\kern1pt\dimen44=#2em\advance\dimen44 1.67em\hangindent\dimen44\hangafter=1\noindent\rlap{\small#1}\kern\dimen44\relax\eatspace}
\def\<#1>{\langle#1\rangle}
  \providecommand\BibTeX{{%
    Bib\TeX}}}
\begin{document}

\title{On finite orbits of infinite correspondences}

\author{Manfred Buchacher}

\maketitle

\section{Abstract}

These notes collect results about algebraic correspondences and adapt them to the setting of correspondences on projective lines. The focus lies on finite orbits of algebraic correspondences. The main result is a field theoretic characterization of the (in)finiteness of the number of finite orbits.

\section{Introduction}

An algebraic correspondence between two projective lines given by a polynomial~$p(X,Y)$ can be interpreted as a multivalued map that assigns to a point $z$ on the projective line the roots of $p(z,Y)$ thereon. Correspondences naturally arise in various settings, the common object of study being properties of a point under iterative application of a correspondence. They generalize rational self-maps~\cite{silverman2007arithmetic} and a considerable part of the literature about them deals with extending important notions for such maps (e.g. canonical heights, its properties under iteration~\cite{ingram2019canonical}, iteration and its behavior under Galois actions~\cite{ingram2018p}). Questions on the structure of finite sets that are invariant under a correspondence have been addressed in~\cite{bell2022invariant, bellaiche2023self, krishnamoorthy2018correspondences, ingram2017critical}. However, important problems remain unsolved~\cite[Prob~2,~3,~4]{buchacher2024separated}.  
\bigskip

The present notes originate from the investigation of field theoretic problems that was started, continued and further extended in~\cite{buchacher2020separating, buchacher2024separating} and~\cite{buchacher2024separated}, respectively. These problems turn out to be intimately connected to questions on algebraic correspondences~\cite[Sec~6]{buchacher2024separating},~\cite[Sec~6]{buchacher2024separated}.   

\bigskip


The paper is organized as follows. In Section~\ref{sec:prelim} we recall the most basic definitions related to algebraic correspondences  and give a field-theoretic characterization of their finiteness as well as some examples that illustrate different phenomena. In Section~\ref{sec:main} we show that any infinite correspondence has only finitely many finite orbits. The arguments are of algebraic-geometric nature and we recall the most important notions from algebraic geometry to keep the paper self-contained. The paper closes with Section~\ref{sec:prob} and the open problem on how big a finite orbit can be.

\section{Preliminaries}\label{sec:prelim}

We assume throughout that $k$ is an algebraically closed field of characteristic zero. We denote by $k[X,Y]$ the ring of polynomials in $X$ and $Y$ over $k$, and we write $k(X,Y)$ for its quotient field. Let $p$ be an irreducible polynomial in $k[X,Y]$ that is neither an element of $k[X]$ nor of $k[Y]$. We denote by $C$ the curve in $\mathbb{P}^1\times \mathbb{P}^1$ that is associated with its bi-homogenization 
\begin{equation*}
x_0^{\deg_x p}y_0^{\deg_y p}p\left(x_1/x_0,y_1/y_0\right).
\end{equation*}
We consider $C$ as an \textbf{algebraic correspondence} between two projective lines, that is, a binary relation over two copies of $\mathbb{P}^1$ that is an algebraic variety. We denote by $\pi_1$ and $\pi_2$ the projections of $C$ on the first and second coordinate, respectively. Given $z\in\mathbb{P}^1$, we define 
\begin{equation*}
C(z) := \pi_2\left( \pi_1^{-1}\left(z\right) \right) \qquad \text{and} \qquad ^{\mathrm{tr}}C(z) :=  \pi_1\left( \pi_2^{-1}\left(z\right) \right).
\end{equation*}
The correspondence is said to be \textbf{finite} if there is a positive integer $n$ such that 
\begin{equation}\label{eq:period}
(^{\mathrm{tr}}C \circ C)^{(n)} \equiv (^{\mathrm{tr}}C \circ C)^{(n+1)}.
\end{equation}
The smallest $n$ such that~\eqref{eq:period} holds is called the \textbf{period} of the correspondence. 

Let $\sim$ be the smallest equivalence relation on $C$ such that
\begin{equation*}
(z_1,z_2) \sim (w_1,w_2) \quad \text{whenever} \quad z_1 = w_1 \quad \text{or} \quad z_2 = w_2. 
\end{equation*}
The equivalence class of $(z_1,z_2)$ is referred to as the \textbf{orbit} of $(z_1,z_2)$. 
\bigskip

Let $x$ and $y$ be the equivalence classes of $X$ and $Y$ in $k[X,Y] / \langle p(X,Y) \rangle$, and let $k(x,y)$ be its quotient field. The following theorem gives a field-theoretic \textbf{characterization} of finite correspondences. We refer to~\cite[Lem~1, Thm~1]{fried1978poncelet} and~\cite[Thm~4.3]{hardouin} for proofs.

\begin{Theorem}\label{thm:Fried}
Assume that $C$ is normal. Then the algebraic correspondence associated with $C$ is finite if and only if 
\begin{equation*}
k(x) \cap k(y) \neq k.
\end{equation*}
\end{Theorem}

We present two examples of a correspondence and illustrate Theorem~\ref{thm:Fried}.

\begin{Example}
Let us consider the algebraic correspondence defined by 
\begin{equation*}
p = XY - X^2Y - XY^2 - 1.
\end{equation*}
Since 
\begin{equation*}
\frac{X-Y}{XY} p = \frac{1 + X^2 - X^3}{X} - \frac{1 + Y^2 - Y^3}{Y},
\end{equation*}
and hence 
\begin{equation*}
k(x) \cap k(y) = k\left(\frac{1+x^2-x^3}{x}\right),
\end{equation*}
it follows that the correspondence is finite. In particular, so is any of its orbits.
\end{Example}

\begin{Example}
The correspondence associated with 
\begin{equation*}
p = X^2 + 3XY + Y^2
\end{equation*}
is not finite, and there are different ways so see this. Since 
\begin{equation*}
p = 0 \quad \Longleftrightarrow \quad Y = \frac{-3 \pm \sqrt{5}}{2} X,
\end{equation*}
and 
\begin{equation*}
\left(\frac{-3+\sqrt{5}}{2}\right) \left(\frac{-3-\sqrt{5}}{2}\right) = 1
\end{equation*}
it easily follows that any orbit is of the form 
\begin{equation*}
\left\{ 
\lambda \cdot
\left( 
\left(\frac{-3 + \sqrt{5}}{2}\right)^k, \left(\frac{-3 + \sqrt{5}}{2}\right)^{k\pm 1}
\right) : k \in\mathbb{Z}
\right\}.
\end{equation*}
for some $\lambda\in\mathbb{P}^1$. Obviously, an orbit is not finite unless $\lambda \in\{ 0, \infty \}$. So the correspondence is not finite either. 

Alternatively, one could argue that if the correspondence were finite, and hence $k(x)\cap k(y) \neq k$, then $p$ had a multiple of the form $aX^n - bY^n$, with $a,b\in k\setminus\{0\}$ and $n\in \mathbb{N}$~\cite[Sec~4]{buchacher2024separating}. But then $p(X,1) \mid aX^n - b$, and the quotient of any pair of roots of $p(X,1)$ were a root of unity. However, the roots of $p(X,1)$ are $(-3+\sqrt{5})/2$ and $(-3-\sqrt{5})/2$. So $k(x)\cap k(y) = k$, and the correspondence is infinite.
\end{Example}

In the previous example all but finitely many orbits were infinite. The purpose of the next section is to show that this is not a coincidence but true for any infinite correspondence.

\section{Finite orbits}\label{sec:main}

Any orbit of a finite correspondence is finite, and there is an upper bound on the size of a finite orbit that holds uniformly. The following proposition provides a (rough) bound.

\begin{Proposition}
If $C$ is a correspondence of period $n$, then all its orbits are finite, and the cardinality of any orbit is bounded by $\max\{\deg_x p, \deg_y p\}^{2(n+1)}$. 
\end{Proposition}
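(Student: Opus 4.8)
The plan is to reduce everything to the dynamics of the single set-valued map $T := {}^{\mathrm{tr}}C \circ C$ acting on the first coordinate in $\mathbb{P}^1$. Writing $d_x = \deg_x p$, $d_y = \deg_y p$ and $d = \max\{d_x, d_y\}$, I would first record uniform fibre bounds. Since $p$ is irreducible and lies in neither $k[X]$ nor $k[Y]$, the curve $C$ contains no vertical or horizontal line, so $\pi_1$ and $\pi_2$ are finite and surjective. Hence $C(z) \neq \emptyset$ and $|C(z)| \le d_y$ for every $z \in \mathbb{P}^1$, and likewise $|{}^{\mathrm{tr}}C(z)| \le d_x$. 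Extending $T$ to subsets by $T(S) = \bigcup_{a \in S} {}^{\mathrm{tr}}C(C(a))$ then gives $|T(S)| \le d_x d_y\, |S| \le d^2 |S|$, and in particular $|T^{(k)}(z)| \le d^{2k}$ for all $k$.

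Next I would identify the orbit $O$ of a point $(z_1,z_2)$ with the data of $T$. A horizontal step in $C$ fixes the first coordinate, while a vertical step $(a,b)\to(a',b)$ changes it precisely when $a' \in {}^{\mathrm{tr}}C(b)$ for some $b \in C(a)$, that is, when $a' \in T(a)$. Thus the set $A$ of first coordinates occurring in $O$ is exactly the set of points reachable from $z_1$ under the relation $a \mapsto T(a)$. This relation is symmetric, since $a' \in T(a)$ means there is $b$ with $(a,b),(a',b) \in C$, and reflexive, since $C(z)\neq\emptyset$; hence $A$ is the increasing union $\bigcup_{k \ge 0} T^{(k)}(z_1)$, and the whole orbit is recovered as $O = \bigcup_{a \in A} \{a\} \times C(a)$.

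Finally I would feed in the period. From $T^{(n)} \equiv T^{(n+1)}$ one gets $T^{(m)} \equiv T^{(n)}$ for every $m \ge n$ by composing repeatedly with $T$, so the increasing union above stabilises and $A = T^{(n)}(z_1)$. Combining with the fibre bounds yields $|A| \le d^{2n}$ and therefore $|O| = \sum_{a \in A} |C(a)| \le d^{2n} \cdot d_y \le d^{2(n+1)}$, which in particular shows that every orbit is finite.

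I expect the main obstacle to be the careful bookkeeping at the finitely many exceptional fibres and at the point at infinity: I must ensure that the bounds $|C(z)| \le d_y$ and $|{}^{\mathrm{tr}}C(z)| \le d_x$, together with nonemptiness of the fibres, genuinely hold at \emph{every} point of $\mathbb{P}^1$, including those where leading coefficients vanish or ramification occurs, and that the period identity $T^{(n)} \equiv T^{(n+1)}$ is read as an equality of set-valued maps at all such points. A secondary point requiring care is the claim that reachability within $O$ coincides with the relation $T$, namely that no additional first coordinates arise from alternating horizontal and vertical moves beyond those captured by iterating $T$.
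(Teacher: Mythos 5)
Your proof is correct and takes essentially the same route as the paper's: the paper's one-line proof rests on exactly the fibre bounds $|\pi_1^{-1}(\pi_1(z))| \le \deg_y p$ and $|\pi_2^{-1}(\pi_2(z))| \le \deg_x p$, leaving implicit the bookkeeping via iterates of $T = {}^{\mathrm{tr}}C \circ C$ and the stabilization $T^{(m)} \equiv T^{(n)}$ for $m \ge n$ that the period provides. Your write-up simply supplies those details (the orbit is the union of the fibres $\{a\}\times C(a)$ over $a \in T^{(n)}(z_1)$), arriving at the same bound.
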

\begin{proof}
The statement follows from the observation that for any $z\in C$ the cardinality of $\pi_1^{-1}(\pi_1(z))$ and $\pi_2^{-1}(\pi_2(z))$ is at most $\deg_y p$ and $\deg_x p$, respectively. 
\end{proof}

The purpose of these notes is to explain that there are only finitely many finite orbits when the correspondence is not finite. 

\begin{Theorem}\label{thm:main}
The number of finite orbits of a normal infinite correspondence is finite. Its number can be bounded as a function of the degree of the correspondence.
\end{Theorem}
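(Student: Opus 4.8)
The plan is to analyze the two projections $\pi_1,\pi_2$ geometrically and to show that infinitely many finite orbits would manufacture a nonconstant element of $k(x)\cap k(y)$, contradicting Theorem~\ref{thm:Fried}. First I would normalize: since $C$ is normal I treat it as a smooth projective curve of genus $g$ with two finite morphisms $\pi_1,\pi_2\colon C\to\mathbb P^1$ of degrees $d_1=\deg_y p$ and $d_2=\deg_x p$. Infiniteness forces $d_1,d_2\ge 2$, for if say $d_1=1$ then $k(x,y)=k(x)$, whence $k(y)\subseteq k(x)$ and $k(x)\cap k(y)=k(y)\neq k$, making $C$ finite. A finite orbit is exactly a finite subset $S\subset C$ that is saturated for both projections, $S=\pi_1^{-1}(\pi_1(S))=\pi_2^{-1}(\pi_2(S))$; writing $A=\pi_1(S)$, $B=\pi_2(S)$ I encode it by the connected bipartite graph $\Gamma_S$ on $A\sqcup B$ with one edge per point of $S$.

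Next I would set up the ramification bookkeeping. With $\delta_i(S)$ the ramification of $\pi_i$ gathered over $A$, resp. $B$, saturation gives $d_1|A|=|S|+\delta_1(S)$ and $d_2|B|=|S|+\delta_2(S)$, so that
\[ \chi(\Gamma_S)=|A|+|B|-|S|=-\Big(1-\tfrac1{d_1}-\tfrac1{d_2}\Big)|S|+\tfrac{\delta_1(S)}{d_1}+\tfrac{\delta_2(S)}{d_2}. \]
Each ramification point of $\pi_1$ (resp. $\pi_2$) lies in a unique orbit, and by Riemann--Hurwitz the total ramification of $\pi_i$ is $\rho_i=2g-2+2d_i$; hence at most $\rho_1+\rho_2$ orbits meet the ramification, a quantity bounded in terms of the degree via $g\le(d_1-1)(d_2-1)$. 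It therefore suffices to bound the \emph{unramified} finite orbits, those with $\delta_1(S)=\delta_2(S)=0$, for which $\Gamma_S$ is $(d_1,d_2)$-biregular and $|S|=d_1|A|=d_2|B|$.

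Granting for a moment a uniform bound on the size of finite orbits, I would finish by a rigidity argument. If there were infinitely many unramified finite orbits then, the sizes being bounded, infinitely many would share one size $n$, producing a positive-dimensional family $\{S_t\}_{t\in T}$ over an irreducible curve $T$: inside $\operatorname{Sym}^{m}(\mathbb P^1)$ (with $m=n/d_2$, via $B=\pi_2(S)$) the condition that $\pi_2^{-1}(B)$ be $\pi_1$-saturated is closed, so an infinite such locus contains a curve. Since distinct orbits are disjoint, the incidence $\mathcal I=\{(t,P):P\in S_t\}$ injects into $C$ and dominates it, so projection to $T$ yields a nonconstant rational map $w\colon C\dashrightarrow T$ assigning to a point its orbit. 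Saturation makes $w$ constant on the fibers of both $\pi_1$ and $\pi_2$, hence $w=w_1\circ\pi_1=w_2\circ\pi_2$ and $k(x)\cap k(y)\supseteq k(w)\neq k$, contradicting Theorem~\ref{thm:Fried}. Thus finitely many unramified orbits remain, and combined with the previous paragraph this bounds the total number in terms of $g,d_1,d_2$, hence of the degree.

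The hard part will be the uniform size bound, without which the family cannot be formed. Its mechanism is transparent when $d_1=d_2=2$: the double covers $\pi_1,\pi_2$ carry deck involutions $\tau,\sigma\in\operatorname{Aut}(C)$, and a finite orbit is precisely a finite orbit of $\langle\sigma,\tau\rangle$. A $\sigma$-invariant function lies in $k(y)$ and a $\tau$-invariant one in $k(x)$, so if $\langle\sigma,\tau\rangle$ were finite, averaging would yield a nonconstant element of $k(x)\cap k(y)$; by Theorem~\ref{thm:Fried} infiniteness forces this group to be infinite, whence $g\le 1$ by Hurwitz's bound and the finite orbits are the few fixed points of the resulting infinite action on $\mathbb P^1$ or an elliptic curve. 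For general $d_1,d_2$ the covers need not be Galois, and I would try to replay this dichotomy through the monodromy groups and a common cover: infiniteness should again preclude a finite symmetry structure, pin the genus, and bound the finite orbits---equivalently the periodic points with finite backward orbit---in size and number. Carrying this out uniformly beyond the case of double covers is the step I expect to be the principal obstacle.
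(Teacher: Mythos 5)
Your proposal has a genuine gap, and it is the one you yourself flag: everything after the ramification bookkeeping hinges on a uniform bound for the size of finite orbits, which you only establish for $d_1=d_2=2$ via deck involutions and leave as an open ``principal obstacle'' in general. This is not a technical footnote but the whole content of the theorem in disguise: once one knows there are only finitely many finite orbits, a uniform size bound follows trivially, and conversely your rigidity argument needs the bound as input, so the missing lemma is essentially as strong as the statement being proved (the paper even records the determination of such a bound as an open problem in Section~\ref{sec:prob}). Even granting the bound, the rigidity step needs care: the points of the curve $T$ you extract from the closed locus in $\operatorname{Sym}^m(\mathbb{P}^1)$ are merely finite saturated sets in the closure of your family of orbits, so the claim that the incidence variety injects into $C$ (needed to get the rational map $w\colon C\dashrightarrow T$) requires an extra argument that generic members of $T$ remain pairwise disjoint.

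The paper's proof shows that no size bound is needed at all, and this is exactly where the two approaches diverge. To each finite orbit $\mathcal{O}$ avoiding the bad set $\Omega$ and the points at infinity (your reduction modulo ramified orbits is the analogue of this step), with projections $A$ and $B$, one attaches the single rational function $\Theta=\prod_{\lambda\in A}(x-\lambda)\big/\prod_{\mu\in B}(y-\mu)$; Lemma~\ref{lem:lem1} shows its divisor is supported at the finitely many points at infinity, \emph{no matter how large $\mathcal{O}$ is}. Divisors supported on a fixed finite set form a free abelian group of finite rank, so as soon as the number of such orbits exceeds the number of points at infinity, some nontrivial product $\Theta_1^{n_1}\cdots\Theta_k^{n_k}$ has trivial divisor, hence is a nonzero constant $c$; since distinct orbits are disjoint, this gives a nonconstant relation $f(x)=c\,g(y)$, so $k(x)\cap k(y)\neq k$, contradicting Theorem~\ref{thm:Fried}. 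This pigeonhole on divisors replaces both your size bound and your family construction, and it also yields the quantitative part of the statement, since the number of points at infinity and the cardinality of $\Omega$ are bounded via resultants in terms of $\deg p$. To salvage your route you would have to run the rigidity argument for orbits of unbounded size, which do not fit into any single symmetric product, so I would recommend switching to the divisor argument.
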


Proofs of Theorem~\ref{thm:main} can be found in~\cite[Thm~1.1]{bell2022invariant} and~\cite[Thm~2.2.1]{bellaiche2023self} in a much more general setting. However, the arguments appear to be much simpler for algebraic correspondences on the projective line. We present them here in the hope that the reader appreciates their simplicity.

We outline the strategy for establishing Theorem~\ref{thm:main}. Any finite orbit that avoids a certain finite set of points can be associated with a rational function of the form $f(x) / g(y)$ that does not have any finite poles on $C$. If there are sufficiently many finite orbits, then the corresponding rational functions can be combined to a function $f_0(x) / g_0(y)$ that does not have any poles on $C$ at all. Thus, it is some (non-zero) constant $c$. Hence $f_0(x) = c g_0(y)$, and thus $k(x) \cap k(y) \neq k$. A contradiction to Theorem~\ref{thm:Fried}.      
\bigskip

In order to work out the details we make use of some concepts and results from algebraic geometry. We recall some of them below. For details, however, we refer to~\cite{fulton2008algebraic}, in particular to Section~$2$ and Section~$3$ therein.

\begin{Definition}
Let $\mathcal{O}$ be a finite orbit on $C$ that avoids the points at infinity, and let $A$ and $B$, respectively, be its projections on the first and second coordinate. We define by
\begin{equation*}
\Theta:= \left(\prod_{\lambda\in A} (x-\lambda)\right)/\left(\prod_{\mu\in B} (y-\mu)\right)
\end{equation*}
the rational function on $C$ associated with $\mathcal{O}$.
\end{Definition}

In general, $\Theta$ might have finite roots and poles on $C$. However, it does not, if $\mathcal{O}$ avoids a certain set of points. 
\bigskip

Since $p$ is irreducible, and hence relatively prime to $\partial p/ \partial X$ and $\partial p / \partial Y$, the elimination ideals of $\langle p, \partial p / \partial X \rangle$ and $\langle p, \partial p / \partial Y \rangle$ are non-trivial. Let $a(X)$, $b(Y)$ and $c(X)$, $d(Y)$, respectively, be their (non-zero) generators, and let $\Omega$ be the finite set of points on $C$ whose first coordinate is a root of $a(X)c(X)$ or whose second coordinate is a root of $b(Y)d(Y)$. We claim that the following lemma holds.

\begin{Lemma}\label{lem:lem1}
If $\mathcal{O}$ avoids $\Omega$, then $\Theta$ does not have any finite roots and poles.
\end{Lemma}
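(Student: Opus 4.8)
The plan is to compute the order of vanishing $\operatorname{ord}_P(\Theta)$ at every finite point $P=(\lambda_0,\mu_0)$ of $C$ and to show it is zero as soon as $\mathcal{O}$ avoids $\Omega$. Since $C$ is normal, hence smooth, the local ring at each finite point is a discrete valuation ring, and I can work with the ramification indices $e_1(P)$ and $e_2(P)$ of the projections $\pi_1$ and $\pi_2$ at $P$. The factor $x-\lambda$ is a unit at $P$ unless $\lambda=\lambda_0$, and when $\lambda=\lambda_0$ the function $x-\lambda_0$ is the pullback of a uniformizer on $\mathbb{P}^1$, so that $\operatorname{ord}_P(x-\lambda_0)=e_1(P)$; symmetrically $\operatorname{ord}_P(y-\mu_0)=e_2(P)$. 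Writing $[\,\cdot\,]$ for the indicator of the enclosed condition, this gives $\operatorname{ord}_P(\Theta)=[\lambda_0\in A]\,e_1(P)-[\mu_0\in B]\,e_2(P)$, and it remains to control the two indicators and the two ramification indices.

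First I would dispose of the ``mixed'' points using the fact that $\mathcal{O}$ is a full equivalence class of $\sim$. If $\lambda_0\in A$, then some $(\lambda_0,\mu')$ lies in $\mathcal{O}$, and any point of $C$ with first coordinate $\lambda_0$ is $\sim$-equivalent to it and hence lies in $\mathcal{O}$; the same holds with the two coordinates interchanged. Consequently, for $P\in C$ one has $\lambda_0\in A\iff\mu_0\in B\iff P\in\mathcal{O}$, so the two indicators above always agree. In particular $\Theta$ can acquire a finite zero or pole only at a point of $\mathcal{O}$ itself, where the order collapses to $e_1(P)-e_2(P)$.

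It then suffices to show that $e_1(P)=e_2(P)=1$ for every $P\in\mathcal{O}$ avoiding $\Omega$, and this is the step that invokes the elimination ideals. The locus $V(p,\partial p/\partial Y)$ coincides with the ramification locus of $\pi_1$ among the finite points of $C$, and by the closure theorem of elimination theory (the common zero set being finite, hence its projection closed) its image under the first projection is precisely the zero set of $c(X)$. Thus $e_1(P)>1$ forces $\lambda_0$ to be a root of $c(X)$; symmetrically $e_2(P)>1$ forces $\mu_0$ to be a root of $b(Y)$. Since $\mathcal{O}$ avoids $\Omega$, the coordinate $\lambda_0$ is not a root of $a(X)c(X)$ and $\mu_0$ is not a root of $b(Y)d(Y)$, so both ramification indices equal $1$ and $\operatorname{ord}_P(\Theta)=0$. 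Combined with the previous paragraph, this shows that $\Theta$ has neither finite zeros nor finite poles.

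The main obstacle I anticipate is bookkeeping rather than depth: matching each elimination ideal to the correct projection and ramification direction, so that $c$ and $b$ (not $a$ and $d$) are the generators that actually govern the orders, and making sure the argument only ever refers to finite points. Here the hypothesis that $\mathcal{O}$ avoids the points at infinity does real work, since it guarantees that the fibers over $A$ and $B$ contain no point at infinity, keeping the saturation argument within the affine part. Finally, the normality of $C$ is used silently throughout to identify $\operatorname{ord}_P$ with the ramification index; without it one would have to pass to the normalization and reinterpret $\Theta$ there.
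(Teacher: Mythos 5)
Your proof is correct and follows essentially the same route as the paper's: avoiding $\Omega$ rules out vanishing of the partial derivatives of $p$ (equivalently, ramification of either projection, equivalently tangency of the vertical or horizontal line) at the orbit points, so that $x-\lambda_0$ and $y-\mu_0$ are both uniformizers there and $\Theta$ has local order zero. Your explicit treatment of the ``mixed'' points via the equivalence-class structure of $\mathcal{O}$ is in fact a bit more careful than the paper's proof, which tacitly uses (without stating) that a finite point of $C$ outside $A\times B$ can have neither coordinate in $A$ nor in $B$.
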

In order to prove the lemma, we make use of the fact that any non-singular point $P$ on $C$ gives rise to a \textbf{valuation} $\nu_P$ on $k(x,y)$. We recall that the non-units of
\begin{equation*}
\mathcal{O}_P(C):=\{ f\in k(x,y) : f(P) \in k \},
\end{equation*}
constitute a unique maximal ideal
\begin{equation*}
\mathfrak{m}_P(C) := \{ f\in \mathcal{O}_P(C) : f(P) = 0 \}.
\end{equation*}
If $P$ is non-singular, then $\mathfrak{m}_P(C)$ is principal, and any line $L$ through $P$ that is not tangent to $C$ at $P$ gives rise to a generator $l$. Any non-zero $z\in\mathcal{O}_P(C)$ may then be uniquely written as $z = u l^n$, $u$ a unit in $\mathcal{O}_P(C)$, $n$ a non-negative integer~\cite[Chap~3, Thm~1]{fulton2008algebraic}. The exponent $n$ is called the valuation of $z$ and denoted by $\nu_P(z)$. The valuation of $0$ is defined by $\nu_P(0) := \infty$. It naturally extends to $k(x,y)$, the quotient field of $\mathcal{O}_P(C)$, by $\nu_P(u_1 l^n / u_2 l^m) := n - m$, $u_1$ and $u_2$ units in $\mathcal{O}_P(C)$, $n_1$ and $n_2$ non-negative integers.  
\bigskip

We now come to the proof of Lemma~\ref{lem:lem1}.
\begin{proof}
If $P$ is a finite point on $C$ that is not an element of $A\times B$, then $\Theta(P)$ is non-zero and finite. So let us assume that $P$ is a point on $C$ that is an element of $A\times B$. The tangent of $C$ at $P = (\alpha, \beta)$ is defined by
\begin{equation*}
(X-\alpha) \frac{\partial p(\alpha,\beta)}{\partial X} + (Y-\beta) \frac{\partial p(\alpha,\beta)}{\partial Y} = 0
\end{equation*}
Since $\mathcal{O}$ avoids $\Omega$, we have
\begin{equation*}
\frac{\partial p}{\partial X}(\alpha,\beta) \neq 0 \quad \text{and} \quad \frac{\partial p}{\partial Y}(\alpha,\beta) \neq 0,
\end{equation*}
so
\begin{equation*}
\nu_P(x-\alpha) = 1 = \nu_P(y-\beta).
\end{equation*}
Hence 
\begin{equation*}
\nu_P(\Theta) = 0.
\end{equation*} 
It follows that all roots and poles of $\Theta$ are supported at the points at infinity. 
\end{proof}

The following corollary shows that finite orbits avoiding $\Omega$ relate to the unit group of the coordinate ring of $p$.

\begin{Corollary}
If $\mathcal{O}$ avoids $\Omega$, then $\Theta$ and $1/\Theta$ are regular on the locus of the affine curve defined by $p$. Hence they define units in the coordinate ring of $p$.
\end{Corollary}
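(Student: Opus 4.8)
The plan is to deduce the corollary almost directly from Lemma~\ref{lem:lem1}. That lemma tells us that $\Theta$ has neither finite zeros nor finite poles, and I would first reinterpret this in terms of local rings. At every non-singular finite point $P$ of $C$ the local ring $\mathcal{O}_P(C)$ is a discrete valuation ring with valuation $\nu_P$, and the equality $\nu_P(\Theta)=0$ established in the proof of Lemma~\ref{lem:lem1} says exactly that $\Theta$ is a unit in $\mathcal{O}_P(C)$. In particular both $\Theta$ and $1/\Theta$ lie in $\mathcal{O}_P(C)$, so both are regular at $P$.

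The only points not covered by this valuation argument are the finite singular points of $C$, and these I would dispose of separately. A finite singular point $(\alpha,\beta)$ is a common zero of $p$, $\partial p/\partial X$ and $\partial p/\partial Y$, so $\alpha$ is a common root of $a(X)$ and $c(X)$, hence a root of $a(X)c(X)$; thus every finite singular point lies in $\Omega$. Now I would exploit the structure of the equivalence relation: any finite point of $C$ that shares its first coordinate with a point of $\mathcal{O}$, or its second coordinate with a point of $\mathcal{O}$, already belongs to $\mathcal{O}$. Since $\mathcal{O}$ avoids $\Omega$, no finite singular point can have first coordinate in $A$ or second coordinate in $B$. At such a point neither $\prod_{\lambda\in A}(x-\lambda)$ nor $\prod_{\mu\in B}(y-\mu)$ vanishes, so both are units in the local ring and $\Theta$, $1/\Theta$ are again regular there. (Under the standing hypothesis that $C$ is normal this paragraph is vacuous, a normal curve having no singular points.)

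It remains to pass from regularity at every point to membership in the coordinate ring. For this I would invoke the standard identification of everywhere-regular functions on an affine variety with its coordinate ring; concretely, the domain $k[x,y]$ equals the intersection $\bigcap_{\mathfrak{m}} k[x,y]_{\mathfrak{m}}$ of its localizations at all maximal ideals, taken inside $k(x,y)$. Having placed $\Theta$ and $1/\Theta$ in every such local ring, I conclude that both lie in $k[x,y]$; since their product is $1$, $\Theta$ is a unit in the coordinate ring. The only real work is the bookkeeping at the singular points in the second paragraph — everything else is a direct rephrasing of Lemma~\ref{lem:lem1} together with this standard commutative-algebra fact.
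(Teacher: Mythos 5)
Your proposal is correct and takes essentially the same route as the paper: deduce from Lemma~\ref{lem:lem1} that $\Theta$ and $1/\Theta$ are regular at every finite point of $C$, then identify everywhere-regular functions on the affine curve with its coordinate ring (a fact the paper cites from Shafarevich and you instead derive from $k[x,y]=\bigcap_{\mathfrak{m}}k[x,y]_{\mathfrak{m}}$), so that $\Theta\cdot(1/\Theta)=1$ exhibits $\Theta$ as a unit. Your explicit bookkeeping at singular points (they lie in $\Omega$, hence cannot share a coordinate with $\mathcal{O}$, so $\Theta$ is a unit in their local rings) is a careful filling-in of what the paper's one-line proof leaves implicit, not a different argument.
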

\begin{proof}
In the proof of Lemma~\ref{lem:lem1} we showed that $\Theta$ does not have any finite roots and poles. So both $\Theta$ and $1/\Theta$ are regular on the affine curve, and hence define units in the coordinate ring of $p$~\cite[Chap~1 Sec~3 Thm~4]{shafarevich1994basic}.
\end{proof}

We now come to the proof of Theorem~\ref{thm:main}.

\begin{proof} 
We first show that if there were too many finite orbits, then the correspondence would be finite. The number of points at infinity is finite, and so is the number of elements of $\Omega$. Hence if there are sufficiently many finite orbits, then there are also finite orbits that avoid these points. Each such finite orbit gives rise to a rational function $\Theta$ whose zeros and poles are at infinity. If there were more such orbits than points at infinity, then by looking at their divisors, we could construct a function $\Theta_1^{n_1}\cdots \Theta_k^{n_k}$ that does not have any poles at all. Hence it must be in $k\setminus \{0\}$~\cite[Chap~1 Sec~5 Cor~1]{shafarevich1994basic}. Since everything in the group generated by the $\Theta_i$'s is a ratio of rational functions in $x$ and $y$, respectively, this then says that we have a relation $f(x) = g(y)$ in $k(x,y)$. So $k(x) \cap k(y)\neq k$, and the first part of the statement follows from Theorem~\ref{thm:Fried}. To prove the second part we just note that the number of points at infinity is bounded by the degrees of $p$. Since $a(X)$, $b(Y)$ and $c(X)$, $d(Y)$ are divisors of the resultants of $p$ and $\partial p / \partial X$ and $p$ and $\partial p / \partial Y$, respectively, and the degrees of the resultants can be bounded as a function of the degree of $p$, the same is true for cardinality of $\Omega$. Thus we're done. 
\end{proof}

\section{Open problems}\label{sec:prob}

Any orbit of a finite correspondence is finite and there is a bound on their size that is uniform. An orbit of an infinite correspondence may be finite, however, there are only finitely many that are not infinite. Hence there is a uniform bound on the size of a finite orbit when a correspondence is infinite. It is therefore natural to ask how such an upper bound could be determined.

\begin{Problem}
Given $p(X,Y)\in k[X,Y]\setminus\left(k[X]\cup k[Y]\right)$, determine a uniform upper bound on the size of a finite orbit. 
\end{Problem}

There are answers on special instances of the problem, see~\cite[Remark~5.1]{hardouin2021differentially}. However, in general it is an open problem.


\section{Acknowledgements}

Thanks go to the Johannes Kepler University Linz and the state of Upper Austria which supported this work with the grant LIT-2022-11-YOU-214. Many thanks go to Jason Bell for the many discussions which preceded these notes.   

\bibliographystyle{plain}
\bibliography{algCor}

\end{document}